\documentclass[a4paper,11pt,english]{smfart}
\usepackage[applemac]{inputenc}
\usepackage[francais,english]{babel}
\usepackage{amsfonts}
\usepackage{amsmath} 
\usepackage{hyperref} 
\usepackage{latexsym}
\usepackage{array}
\usepackage{amssymb}
\usepackage{enumerate}
\usepackage{smfthm}
\usepackage{graphicx}
\theoremstyle{plain}

\newtheorem{definition}{Definition}

\renewcommand{\l}{  \ell  }

\newcommand{\R}{  \mathbb{R}   }

\newcommand{\eps}{\varepsilon}
\newcommand{\e}{  \mathrm{e}   }

\newcommand{\Z}{  \mathbb{Z}   }
\newcommand{\N}{  \mathbb{N}   }

\newcommand{\A}{  \mathcal{A}   }

\newcommand{\T}{  \mathbb{S}^1 }

\newcommand{\dis}{\displaystyle}

\newcommand{\ov}{  \overline  }
\renewcommand{\a}{  \alpha   }
\renewcommand{\b}{  \beta   }

\renewcommand{\phi}{  \varphi  }

\newcommand{\wh}{  \widehat   }

\renewcommand{\S}{  \mathcal{S}  }
\numberwithin{equation}{section}
 
\textwidth= 17cm
\textheight = 20cm
\hoffset =-1.5cm

 \author{ Emanuele Haus}
\address{Laboratoire de Math\'ematiques J. Leray, Universit\'e de Nantes, UMR CNRS 6629\\
2, rue de la Houssini\`ere \\
44322 Nantes Cedex 03, France.}
\email{emanuele.haus@univ-nantes.fr}

\author{ Laurent Thomann }
\address{Laboratoire de Math\'ematiques J. Leray, Universit\'e de Nantes, UMR CNRS 6629\\
2, rue de la Houssini\`ere \\
44322 Nantes Cedex 03, France.}
\email{laurent.thomann@univ-nantes.fr}

\title[Dynamics on resonant clusters for NLS]
{Dynamics on resonant clusters for the quintic non linear Schr\"odinger equation}

\begin{document}

\begin{abstract}
We construct solutions to the quintic nonlinear Schr\"odinger equation   on the circle
$$  i\partial_t u+\partial_{x}^{2}u = \nu \ |u|^4u,\quad \nu\ll1, \ x\in \mathbb{S}^{1},\ t\in \R,$$
with initial conditions supported on arbitrarily many different resonant clusters. This is a sequel  the work \cite{GT2} of Beno\^it Gr\'ebert and the second author.
    \end{abstract}

\keywords{Nonlinear Schr\"odinger equation, Resonant normal form, 
energy exchange. }
\altkeywords{Forme Normale, Equation de Schr\"odinger non linÈaire,
rÈsonances, Èchange d'Ènergie}\frontmatter
\subjclass{ 37K45, 35Q55, 35B34, 35B35}
\thanks{
\noindent Both authors were supported in part by the  grant ANR-10-JCJC 0109.}

\maketitle

\section{Introduction and results}
 \subsection{General introduction}
In this paper we consider   the quintic nonlinear periodic Schr\"odinger equation
\begin{equation}\label{cauchy} 
\left\{
\begin{aligned}
&i\partial_t u+\partial_{x}^{2}u  =  \nu|u|^{4}u,\quad
(t,x)\in\R\times \mathbb{S}^1,\\
&u(0,x)= u_{0}(x),
\end{aligned}
\right.
\end{equation} 
 where $\nu>0$ is some small parameter. In \cite{GT2} B. Gr\'ebert  and the second author showed  a beating effect for \eqref{cauchy}: there exist solutions which are supported on 4 so-called resonant modes and which stay close to a time-periodic solution for long time. These solutions moreover  show that there is an energy exchange between the considered modes, which is a genuine nonlinear effect. We call this mechanism a beating effect. Such a phenomenon was first observed by   Gr\'ebert and Villegas-Blas  \cite{GV} on a cubic Schr\"odinger equation. Let us also mention the work \cite{GPT} by Gr\'ebert-Paturel-Thomann where a general principle is extracted so that such a phenomenon occurs. Finally we refer to the introduction of \cite{GT2} for more results on the long time dynamics for \eqref{cauchy} and related models.

Our aim here is to extend the main result of \cite{GT2} and show that we can construct solutions to \eqref{cauchy} supported on arbitrarily many resonant modes.

Recall \cite[Definition 1.1]{GT2} that a set of the form 
\begin{equation}\label{res}
\big\{n,n+3k,n+4k,n+k\big\}, \quad k\in \Z\backslash\{0\}\; \text{ and } n\in\Z,
\end{equation}
is called  a resonant set. These sets exactly correspond to the  resonant monomials of order 6 of the Hamiltonian of \eqref{cauchy} which contain 4 different modes. Here, we consider resonant sets of the type
\begin{equation}\label{Ak} 
\mathcal{A}_{k}=\big\{n_{k}-2,n_{k}-1,n_{k}+1,n_{k}+2\big\},
\end{equation}
where  $(n_{k})_{k\geq 1}$ is  a sequence of integers which will be described.  In the sequel, we denote by
\begin{equation}\label{1.3}
 a^{(k)}_{2}=n_{k}-2, \quad a^{(k)}_{1}=n_{k}+1,\quad  b^{(k)}_{2}=n_{k}+2, \quad b^{(k)}_{1}=n_{k}-1,
\end{equation}
and for $K\geq 1$, we define
\begin{equation*}
\mathcal{A}=\bigcup_{k=1}^{K}\mathcal{A}_{k}.
\end{equation*}

To begin with, let us recall the result of \cite{GT2} for the resonant set $\A_{k}$.

  \begin{theo}[\cite{GT2}]\label{thm0}
Let $k\geq 1$.  There exist $T_{k}>0$, $\nu_{0}>0$, $\a_{k}\in(0,1/2)$ and a $2T_k-$periodic function $K^{(k)}_{\star}:\R\longmapsto (0,1)$ which satisfies $K^{(k)}_{\star}(0)\leq \a_{k}$ and $K^{(k)}_{\star}(T_{k})\geq1-\a_{k}$ and if $0<\nu<\nu_{0}$, there exists a solution $u_{k}$ to \eqref{cauchy} satisfying for all $|t|\leq \nu^{-9/8}$
\begin{equation*}
u_{k}(t,x)=v_{k}(t,x)+\nu^{1/4} q(t,x),
\end{equation*}
with 
\begin{equation*}
 v_{k}(t,x)=\sum_{j\in \mathcal{A}_{k}}w_{j}(t)\e^{ijx},
  \end{equation*}
\vspace{-0,4 cm}
and
\begin{equation*} 
\begin{array}{ccccc}
 |w_{a^{(k)}_{1}}(t)|^{2}&=&2 |w_{a^{(k)}_{2}}(t)|^{2}&=&K^{(k)}_{\star}(\nu t)\\[5pt]
 |w_{b^{(k)}_{1}}(t)|^{2}&=&2 |w_{b^{(k)}_{2}}(t)|^{2}&=&1-K^{(k)}_{\star}(\nu t),
 \end{array}
 \end{equation*}
 and $q$ is smooth in time and analytic in space on $[-\nu^{-9/8},\nu^{-9/8}]\times \mathbb{S}^{1} $. Moreover, the Fourier coefficients $\wh{q}_{j}(t)$ of $q(t)$ satisfy 
\begin{equation*}
\sup_{|t|\leq \nu^{-9/8}}|\wh{q}_{j}(t)|\leq C \e^{-|j|},
\end{equation*}
with $C$ independent of $k\geq 1$ and $\nu>0$.
 \end{theo}

The result is not exactly stated like this in \cite{GT2}, but is proven there. In particular, the analycity of the remainder term follows from the analytical framework of the Birkhoff normal form procedure in \cite[Section 3]{GT2}. See also \cite{GPT}.

Theorem \ref{thm0} shows that there are non trivial interactions between the modes in $\A_{k}$ and they occur for times $t\sim \nu^{-1} T_{k}$.

\subsection{The main result}
 
In this paper we prove that if the resonant sets are carefully chosen, there exist  solutions to \eqref{cauchy} which are the superposition of solutions of the previous type.

\begin{theo}\label{thm1}
There exists $\nu_0>0$ and there exists an increasing sequence of integers $(n_{k})_{k\geq 1}$ such that, if $0<\nu<\nu_0$, for all $K\geq 1$ with $n_{K} \leq -c\ln \nu$ there exists a solution to \eqref{cauchy} which reads for all $|t|\leq \nu^{-9/8}$
\begin{equation}\label{u}
 u(t,x)=\sum_{k=1}^{K}\e^{-n_{k}}v_{k}(t,x)+\nu^{1/4} q(t,x),
  \end{equation}
where
\begin{enumerate}[i)]
\item For all $1\leq k\leq K$, $v_{k}$ is as in Theorem \ref{thm0}.
\item The error term is smooth in time and analytic in space on $[-\nu^{-9/8},\nu^{-9/8}]\times \mathbb{S}^{1} $. Moreover, the Fourier coefficients $\wh{q}_{j}(t)$ of $q(t)$ satisfy 
\begin{equation*}
\sup_{|t|\leq \nu^{-9/8}}|\wh{q}_{j}(t)|\leq C \e^{-|j|},
\end{equation*}
with $C$ independent of $K\geq 1$ and $\nu>0$.
\end{enumerate}
\end{theo}

This result shows  a beating effect inside of each  resonant set, but there is no energy transfer between two different resonant clusters. In particular, we do not show an energy transfer from the low to the high frequencies.

In our example, for all $s\geq 0$, $\|u\|_{H^{s}}$ is almost preserved during the time. This is due to the particular form \eqref{Ak} of our resonant sets. We believe that a similar construction  for more general resonant sets \eqref{res} can be done.

For all $j\in \Z$, the Fourier coefficient $\wh{u}_{j}$ of $u$ in \eqref{u} satisfies 
\begin{equation*}
\sup_{|t|\leq \nu^{-9/8}}|\wh{u}_{j}(t)|\leq C \e^{-|j|},
\end{equation*}
thus $u$ is bounded in an analytic norm uniformly in $K\geq 1$ for this time scale. A natural question is whether we can choose $K=+\infty$ in Theorem \ref{thm1}. Our method does not allow this extension since the period of $v_{k}$ grows to infinity with $k$. Moreover, the expansion in \eqref{u} is relevant as long as $\e^{-n_{k}}v_{k}$ is larger than the error term, and this gives the limitation $n_{K}\leq -c\ln \nu$.

In fact there are many sequences which satisfy Theorem \ref{thm1}: almost all sequences which satisfy $n_{k+1}\geq 12 n^{2}_{k}$ can be taken (see the proof of Proposition \ref{prop.nonres}).

Our approach allows also to treat the  focusing Schr\"odinger equation ($\nu<0$), but for simplicity we only deal with the case $\nu>0$.\\

  With an appropriate choice of the initial conditions, we can construct  quasi-periodic solutions in time for a large set of frequencies. 
  
  \begin{coro}\label{Coro}
For every $K\geq 1$ and every sequence of real numbers $\Lambda_{1},\Lambda_{2},\dots,\Lambda_{K}>0$, there exists $N\in\N$ so that, if $\nu>0$ is small enough, we can construct $v_{k}$ of period  $2N\Lambda_{k}/\nu$  in time.
\end{coro}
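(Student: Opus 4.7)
\emph{Proof proposal.} The plan is to exploit the scaling symmetry of the effective resonant dynamics on each cluster $\A_k$. After the Birkhoff normal form reduction of \cite[Section~3]{GT2}, the evolution of the Fourier coefficients $(w_j(\tau))_{j\in\A_k}$ is governed by a Hamiltonian homogeneous of degree~$6$ in the $w_j$. Consequently the effective flow enjoys the one-parameter symmetry $(w_j(\tau))\mapsto(\rho\, w_j(\rho^4\tau))$, under which beating periods rescale by the factor $\rho^{-4}$.

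A key observation is that the period $T_k$ of Theorem \ref{thm0} does not actually depend on $k$. Indeed, the resonant monomials of $\int_{\T}|u|^6\,\dd x$ are indexed by tuples satisfying both $a_1+a_2+a_3=b_1+b_2+b_3$ and $a_1^2+a_2^2+a_3^2=b_1^2+b_2^2+b_3^2$; once the momentum relation is imposed, the frequency relation is jointly invariant under every translation $j\mapsto j+p$ of the indices. Consequently the effective Hamiltonian on $\A_k$ is obtained from the one on $\A_1$ by a mere relabeling of modes, and the beating orbit singled out in Theorem \ref{thm0} has the same period for every $k$. Denote this common value by $T_\star$.

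Given $\Lambda_1,\dots,\Lambda_K>0$, choose an integer
\begin{equation*}
N > \frac{T_\star}{\min_{1\leq k\leq K}\Lambda_k},
\end{equation*}
and set $\rho_k:=(T_\star/(N\Lambda_k))^{1/4}\in(0,1)$. Let $\tilde v_k$ denote the rescaled resonant block obtained by feeding the effective dynamics on $\A_k$ with the initial data of Theorem \ref{thm0} multiplied by $\rho_k$. By the scaling symmetry above, $\tilde v_k$ is still supported on $\A_k$ and is periodic in the physical time $t$ with period $2T_\star/(\nu\rho_k^4)=2N\Lambda_k/\nu$, as required.

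It remains to run the proof of Theorem \ref{thm1} with $\tilde v_k$ in place of $v_k$, looking for a solution of the form
\begin{equation*}
u(t,x) = \sum_{k=1}^{K}\e^{-n_k}\tilde v_k(t,x) + \nu^{1/4} q(t,x),
\end{equation*}
with the sequence $(n_k)$ selected exactly as in Proposition \ref{prop.nonres} (the non-resonance conditions depend only on the positions $n_k$, not on amplitudes). Since $\rho_k\leq 1$, the smallness hypotheses of the Birkhoff normal form and of the fixed-point construction for $q$ are preserved, possibly after shrinking $\nu_0$. In my view, the only subtle point in this program is the $k$-independence of $T_\star$ isolated above; once that is granted, everything else is a cosmetic adaptation of the argument for Theorem \ref{thm1}.
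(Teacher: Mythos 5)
Your proposal contains a genuine gap in the period computation. You treat $\tilde v_k$ as if its beating period in the multi-cluster solution $u$ were the single-cluster value $2T_\star/(\nu\rho_k^4)$, carried over from Theorem \ref{thm0}. But in Theorem \ref{thm1} the $k$-th cluster enters $u$ with amplitude $\e^{-n_k}$ (or, after your rescaling, $\rho_k\e^{-n_k}$), not with amplitude of order one. Since the resonant dynamics is governed by a degree-$6$ Hamiltonian, the beating period scales like $(\text{amplitude})^{-4}$, which produces an extra factor $\e^{4n_k}$. This is exactly what the paper records in the proof of Theorem \ref{thm1}: ``the period of $K_0^{(k)}$ is $2T_k\eps_k^{-4}=2T_k\eps^{-4}\e^{4n_k}$''. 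Your formula $\rho_k=(T_\star/(N\Lambda_k))^{1/4}$ therefore does not yield period $2N\Lambda_k/\nu$; the true period would be $\approx 2T_\star\e^{4n_k}/(\nu\rho_k^4)$, off by the large factor $\e^{4n_k}$.

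A second, related point: the $k$-independence of $T_\star$ that you isolate is valid only for the single-cluster Hamiltonian of Theorem \ref{thm0}. In the multi-cluster reduction, the effective Hamiltonian $H_\star^{(k)}$ carries the coefficient $10J\eps_k^{-2}-6$, where $J$ is the total action over \emph{all} clusters; this coefficient is strongly $k$-dependent (it grows like $\e^{2n_k}$) and also changes under your cluster-by-cluster rescaling $\rho_k$. Hence the clean one-parameter scaling $(w_j)\mapsto(\rho_k w_j)$ is not an exact symmetry of the relevant dynamics once several clusters are present. The paper's proof avoids both obstacles by keeping the amplitude ratios $\e^{-n_k}$ fixed and instead varying the initial value $K^{(k)}(0)\in(\gamma_k,1/2)$, which, by Proposition \ref{prop.mar}, lets $T_k$ sweep the interval $\big(\frac{2\pi}{9}(10J\eps_k^{-2}-3)^{-1/2},+\infty\big)\supset\big(\pi/(9\sqrt3),+\infty\big)$; one then solves $T_k\e^{4n_k}=N\Lambda_k$, choosing $N$ so that $N\Lambda_k>\pi\e^{4n_K}/(9\sqrt3)$ for all $k$ and $\nu$ small enough so that $N\Lambda_k<\nu^{-1/8}$. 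If you insist on your amplitude-rescaling route, you would at minimum need $\rho_k=(T_\star\e^{4n_k}/(N\Lambda_k))^{1/4}$ and a verification that the $J$-dependence of the effective Hamiltonian does not spoil the asserted scaling of the period, which is not automatic.
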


This is clearly a nonlinear phenomenon, since in the linear regime all the frequencies are integer multiples of the same number.

 \subsection{Plan of the paper}

In Section \ref{Sect2} we prove the existence of resonant sets made up of several clusters which do not interact much with one another. In Section \ref{Sect3} we recall the Hamiltonian structure of \eqref{cauchy} and we study the model equation, which is obtained by truncating the error terms of the normal form (higher order terms and terms involving frequencies outside the resonant sets). In Section \ref{Sect4} we perform the perturbation analysis and collect the results of the previous sections in order to prove our main results. Throughout the paper, we widely rely on the results obtained by B. Gr\'ebert and the second author in \cite{GT2}: since several proofs turn out to be similar, here we choose to highlight what is new and different, rather than copy out the proofs in \cite{GT2}.

\section{Existence of resonant sets}\label{Sect2}
In this section, we show the existence of a sequence $(n_{k})$ such that the modes in $\A_{k}$ and $\A_{j}$ do not interact much when $k<j$. We will see that this is ensured when $(n_{k})$ is growing fast enough and if it satisfies  some arithmetical condition.

 \begin{lemm}\label{lem.couple}
Assume that the sequence $(n_{k})_{k\geq 1}$ satisfies $n_{1}\geq 3$ and $n_{k+1}\geq 12n^{2}_{k}$. Then the following holds true: Let  $\mathcal{S}:= \{j_1,j_2,j_3,\ell_1,\ell_{2},\ell_{3}\} \subset \mathcal{A}   $ with 
 \begin{equation} \label{eq.res}
\left\{
\begin{aligned}
& j_{1}^{2}+j_{2}^{2}+j^{2}_{3}  = \ell^{2}_1+\ell^{2}_{2}+\ell^{2}_{3} ,\\
& j_{1}+j_{2}+j_{3}=\ell_1+\ell_{2}+\ell_{3},
\end{aligned} 
\right. \quad \text{and}\quad \big\{j_{1},j_{2},j_{3}\big\}\neq \big\{\ell_{1},\ell_{2},\ell_{3}\big\}.
\end{equation}
Then there exists $1\leq k\leq K$ so that $\S\subset \A_{k}$.
 \end{lemm}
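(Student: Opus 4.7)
The plan is to argue by contradiction: suppose $\S$ is not contained in any single $\A_k$, and let $m=\max\{k:\S\cap\A_k\neq\emptyset\}$, so that $m\geq 2$ (otherwise $\S\subset\A_1$ and we are done). For each $x\in\S\cap\A_k$ I write $x=n_k+\eta$ with $\eta\in\{-2,-1,1,2\}$. Denote by $a,b$ the number of $j_i$'s, resp.\ $\ell_i$'s, falling in the top cluster $\A_m$; by $E=\sum_{k_i=m}\eps_i$ and $D=\sum_{k'_i=m}\delta_i$ the corresponding offset sums; and by $S_j^{\mathrm{low}},S_\ell^{\mathrm{low}}$ (resp.\ $Q_j^{\mathrm{low}},Q_\ell^{\mathrm{low}}$) the sums (resp.\ sums of squares) of the remaining indices, which lie in $\bigcup_{k<m}\A_k$.

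First I would rewrite the linear identity of \eqref{eq.res} as
$$(a-b)\,n_m+(E-D)+(S_j^{\mathrm{low}}-S_\ell^{\mathrm{low}})=0.$$
Since $|E-D|\leq 12$ and $|S_j^{\mathrm{low}}-S_\ell^{\mathrm{low}}|\leq 6(n_{m-1}+2)$, whereas $n_m\geq 12\,n_{m-1}^2$ with $n_{m-1}\geq 3$, the leading term dominates unless $a=b$. Substituting $a=b$ into the quadratic identity cancels its $n_m^2$ contribution and leaves
$$2\,n_m(E-D)+\Bigl(\sum\eps_i^2-\sum\delta_i^2\Bigr)+(Q_j^{\mathrm{low}}-Q_\ell^{\mathrm{low}})=0,$$
whose last two brackets are bounded by $16$ and $6(n_{m-1}+2)^2$, respectively, and are again dominated by $2n_m\geq 24\,n_{m-1}^2$. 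Hence $E=D$; plugging back into the linear identity gives $S_j^{\mathrm{low}}=S_\ell^{\mathrm{low}}$, and the quadratic identity reduces to $\sum\eps_i^2-\sum\delta_i^2=Q_\ell^{\mathrm{low}}-Q_j^{\mathrm{low}}$.

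It then remains to analyse the three cases $a=b\in\{1,2,3\}$. The case $a=b=3$ is precisely the desired conclusion $\S\subset\A_m$. For $a=b=2$ there is a single low-cluster index on each side, and $S^{\mathrm{low}}$-equality makes them equal; hence $Q_j^{\mathrm{low}}=Q_\ell^{\mathrm{low}}$, so that the two offset pairs in $\A_m$ satisfy $\eps_1+\eps_2=\delta_1+\delta_2$ and $\eps_1^2+\eps_2^2=\delta_1^2+\delta_2^2$, forcing $\{\eps_1,\eps_2\}=\{\delta_1,\delta_2\}$. For $a=b=1$, $E=D$ directly gives $\eps_1=\delta_1$, so the cluster-$m$ elements coincide and the residual identities reduce to $j_2+j_3=\ell_2+\ell_3$ together with $j_2^2+j_3^2=\ell_2^2+\ell_3^2$, forcing $\{j_2,j_3\}=\{\ell_2,\ell_3\}$. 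In both nontrivial subcases we obtain $\{j_1,j_2,j_3\}=\{\ell_1,\ell_2,\ell_3\}$, contradicting \eqref{eq.res}.

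The main obstacle will be the arithmetic bookkeeping at each step: one must verify that the growth hypothesis $n_{k+1}\geq 12\,n_k^2$ with $n_1\geq 3$ really dominates the $O(n_{m-1})$ correction at the linear level and the $O(n_{m-1}^2)$ correction at the quadratic level. Once those estimates are in hand, the structural reduction to a multiset equality is immediate from the elementary fact that a (multi)set of at most two real numbers is determined by its sum and sum of squares.
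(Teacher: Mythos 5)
Your argument is correct and rests on the same ingredients as the paper's proof: write the top-cluster elements as $n_m+O(2)$, use the growth condition $n_{k+1}\geq 12n_k^2$ to let the leading terms dominate the lower-cluster contributions in the linear and quadratic identities, and finish with the fact that a pair is determined by its sum and sum of squares (which is \cite[Lemma 2.1]{GT2}, invoked in the paper). The only difference is organizational — you balance the counts globally (linear identity first, then offsets via the quadratic one), whereas the paper anchors at the maximal element and pulls the indices into its cluster one pair at a time — so this is essentially the paper's approach, and your estimates do check out with the stated constants.
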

 
  \begin{proof}
  Consider $\S=\{j_1,j_2,j_3,\ell_1,\ell_{2},\ell_{3}\} \subset \A$ such that \eqref{eq.res} holds. Assume that $\l_{1}=\max \S$. Then $\l_{1}=n_{k}+r_{1}$ for some $1\leq k\leq K$ and $r_{1}\in \{-2,-1,1,2\}$. If $k=1$ we are done, hence we assume that $k\geq 2$. We claim that one of the integers $j_{1},j_{2},j_{3}$, say $j_{1}$, is of the form $j_{1}=n_{k}+q_{1}$ with  $q_{1}\in \{-2,-1,1,2\}$. If it is not the case, $j_{1},j_{2},j_{3}\leq n_{k-1}+2$ and thus 
  \begin{equation*}
  j^{2}_{1}+  j^{2}_{2}+  j^{2}_{2}\leq 3(n_{k-1}+2)^{2}< 12 n^{2}_{k-1}\leq n_{k}\leq (n_{k}-2)^{2}\leq \l^{2}_{1},
  \end{equation*}
 which is a contradiction. We plug the expressions of $\l_{1}$ and $j_{1}$ in \eqref{eq.res} and obtain
 \begin{equation}\label{25}
 2(q_{1}-r_{1})n_{k}=\l^{2}_{2}+\l^{2}_{3}-j^{2}_{2}-j^{2}_{3}+r^{2}_{1}-q^{2}_{1}
 \end{equation}
From \cite[Lemma 2.1]{GT2} we have  $q_{1}\neq r_{1}$. \\
$\bullet$ Assume that $\{\l_{2},\l_{3},j_{2},j_{3}\}\subset \bigcup_{m=1}^{k-1}\A_{m}$. Then 
\begin{eqnarray*}
|\l^{2}_{2}+\l^{2}_{3}-j^{2}_{2}-j^{2}_{3}+r^{2}_{1}-q^{2}_{1}|&\leq &6(n_{k-1}+2)^{2}\\
&<&24n^{2}_{k-1}\\
&\leq & 2n_{k}\\
&\leq &2|q_{1}-r_{1}|n_{k},
\end{eqnarray*}
  which is a contradiction.\\
  $\bullet$ We can therefore assume that $\l_{2}\in \A_{k}$. If $\{j_{2},j_{3}\}\subset \bigcup_{m=1}^{k-1}\A_{m}$ we can show that 
  \begin{equation*}
 2(q_{1}-r_{1})n_{k}<\l^{2}_{2}+\l^{2}_{3}-j^{2}_{2}-j^{2}_{3}+r^{2}_{1}-q^{2}_{1},
  \end{equation*}
  thus we can assume that $j_{2}\in \A_{k}$ and we write $j_{2}=n_{k}+q_{2}$, $\l_{2}=n_{k}+r_{2}$. The relation \eqref{25} then reads 
   \begin{equation*} 
 2(q_{1}+q_{2}-r_{1}-r_{2})n_{k}=\l^{2}_{3}-j^{2}_{3}+r^{2}_{1}+r^{2}_{2}-q^{2}_{1}-q^{2}_{2}.
 \end{equation*}
 With a similar argument we deduce that $\{\l_{3},j_{3}\}\subset \A_{k}$, which completes the proof.
  \end{proof}

 Define the set 
   \begin{equation*}
  {\mathcal R}=\big\{\,(j_1,j_2,j_3,\ell_1,\ell_{2},\ell_{3})\in\mathbb{Z}^6\;\;s.t. \quad 
 j_1+j_2+j_3=\ell_{1}+\ell_{2}+\ell_{3} \;\; {\rm and } \;\; j_1^2+j_2^2+j_3^2=\ell^{2}_{1}+\ell^{2}_{2}+\ell^{2}_{3}\,\big\}.
  \end{equation*}
 \begin{prop}\label{prop.nonres}
 There exists a sequence $(n_{k})_{k\geq 1}$ which satisfies $n_{1}\geq 3$,  $n_{k+1}\geq 12 n^{2}_{k}$ and so that the following holds true: Let  $(j_{1},j_{2},j_{3},\ell_{1},p_{1},p_{2})\in \mathcal{R}$. Assume that $j_{1},j_{2},j_{3},\ell_{1}\in \mathcal{A}$. Then $p_{1},p_{2}\in \mathcal{A}$.
 \end{prop}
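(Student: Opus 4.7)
The plan is to analyze the Diophantine system $p_1+p_2 = S$, $p_1^2+p_2^2 = T$ directly, where $S := j_1+j_2+j_3-\ell_1$ and $T := j_1^2+j_2^2+j_3^2-\ell_1^2$. Equivalently $p_1p_2=(S^2-T)/2$, so integer solutions exist iff $\Delta:=2T-S^2$ is a nonnegative perfect square, in which case $p_{1,2}=(S\mp\sqrt{\Delta})/2$. Let $K$ be the largest cluster index appearing among $\{j_1,j_2,j_3,\ell_1\}$, and after permuting the $j_\alpha$ assume $j_1=n_K+r_1\in\mathcal{A}_K$. The trivial case $\ell_1 \in \{j_1, j_2, j_3\}$ forces $\{p_1,p_2\}$ to be the complementary pair from $\{j_1,j_2,j_3\}$ by direct substitution, so lies in $\mathcal{A}$ and is set aside; all other configurations are organized by how many of $j_2,j_3,\ell_1$ also lie in $\mathcal{A}_K$.

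For each such subcase, substitute $j_\alpha = n_{k_\alpha}+r_\alpha$ and $\ell_1=n_{m_1}+s_1$ into $\Delta$ to obtain an expansion of the form $\Delta = f(n_K) + g(\Theta)$, where $f$ is a quadratic polynomial in $n_K$ with leading coefficient $1$ or $4$ (depending on how many of the six elements lie in $\mathcal{A}_K$) and $g = O(n_{K-1}^2)$; here $\Theta$ denotes the finite list of parameters coming from the lower clusters. The growth hypothesis $n_{K-1}^2 \leq n_K/12$ then forces any integer $D \geq 0$ with $D^2 = \Delta$ to equal a specific integer linear in $n_K$ and $\Theta$, because any other value would produce a discrepancy of order $n_K$, too large to be absorbed by $g$. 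This localization reduces integrality of $(p_1, p_2)$ to an arithmetic compatibility on $\Theta$ plus a parity condition on $r_1$ (ruling out odd offsets). When the compatibility is satisfied, one reads off $p_2 = n_K + r_1/2 \in \mathcal{A}_K$ (so $r_1 \in \{-2, 2\}$) and $p_1 = S - p_2 \in \bigcup_{m<K}\mathcal{A}_m$. The full six-tuple then lies in $\mathcal{A}$, and Lemma \ref{lem.couple} applied to it forces $\{j_1, j_2, j_3\} = \{\ell_1, p_1, p_2\}$, i.e., exactly the trivial case already discarded; so for non-trivial configurations the compatibility fails for generic $\Theta$ and $n_K$, and no integer $(p_1, p_2)$ exists.

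The sequence $(n_k)_{k \geq 1}$ is built inductively: having fixed $n_1 \geq 3, \ldots, n_{K-1}$, I would pick $n_K \geq 12 n_{K-1}^2$ avoiding the finite set of integer values at which, for some non-trivial choice of $\Theta$ with entries in $\bigcup_{m<K}\mathcal{A}_m$, the arithmetic compatibility would accidentally hold. Since there are only finitely many such $\Theta$ at each stage and each imposes finitely many forbidden $n_K$, a valid choice always exists; in fact almost every sequence satisfying $n_{k+1} \geq 12 n_k^2$ works, which justifies the remark after Theorem \ref{thm1}. The main obstacle is the subcase-by-subcase bookkeeping: deriving the explicit expansion of $\Delta$ in each of the four configurations (one, two, three, or four of $j_2, j_3, \ell_1$ sharing cluster index $K$) and checking that the non-triviality of the resulting polynomial constraint on $n_K$ is preserved outside the trivial case and its permutations, so that the inductive construction indeed avoids all bad configurations.
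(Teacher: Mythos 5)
Your outline reproduces the paper's framework — compute the discriminant $\Delta = 2T - S^2$, expand in $n_K$, and pick $n_{K+1}$ inductively to avoid the finitely many bad values — but it rests on an incorrect assumption about the leading behavior of $\Delta$ that breaks the argument in two of the essential subcases.

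You claim that in each nontrivial configuration $\Delta = f(n_K) + g(\Theta)$ with $f$ a quadratic in $n_K$ of leading coefficient $1$ or $4$, so that $\sqrt{\Delta}$ is pinned to within $O(1)$ of a specific value once $n_K \gg n_{K-1}^2$. This is true when exactly one or exactly three of $j_1,j_2,j_3,\ell_1$ lie in $\mathcal{A}_{K+1}$ (the cases $(1,0)$ and $(2,1)$ in the paper's enumeration, where $S$ picks up exactly one copy of $n_{K+1}$). But in the cases $(2,0)$ (two $j$'s in $\mathcal{A}_{K+1}$, $\ell_1$ not) and $(1,1)$ (one $j$ and $\ell_1$ in $\mathcal{A}_{K+1}$), the $n_{K+1}^2$ contributions from $2T$ and $S^2$ \emph{cancel exactly}: in case $(2,0)$ one has $S=2n_{K+1}+O(1)$ and $T=2n_{K+1}^2+O(n_{K+1})$, so $\Delta = 2T - S^2 = -4(j_3-\ell_1)\,n_{K+1} + O(1)$; in case $(1,1)$, $S=O(1)$ and $T$ is linear in $n_{K+1}$, giving $\Delta = 4(c_1-c_2)\,n_{K+1}+O(1)$. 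In both cases $\Delta$ is \emph{linear} in $n_{K+1}$. Your localization argument — that a square root of $\Delta$ must take a specific value up to bounded error because any deviation costs $\sim n_K$ — then collapses, since for a linear expression $\alpha n + \beta$ the perfect-square values of $n$ are not ruled out by any growth condition: there are roughly $\sqrt{N}$ of them up to $N$. The growth hypothesis $n_{K+1}\ge 12 n_K^2$ alone is therefore insufficient, and your construction step ("pick $n_K$ avoiding a finite set of integers") does not go through unchanged because the bad set is infinite, not finite.

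The paper fills this gap with an additional arithmetic ingredient: after isolating the linear cases, it invokes Bombieri--Zannier's bound on squares in arithmetic progressions to show that the positive integers failing some $\alpha_r n+\beta_r \ne \square$ have density zero, so infinitely many admissible $n_{K+1}$ remain. (A more elementary count of squares in an arithmetic progression would also serve, since each $|\alpha_r|\le 16$ here; either way, some such density argument is required and is entirely absent from your outline.) You would also need to handle the degenerate cases where the coefficient $\alpha$ vanishes — which happens precisely when $\ell_1\in\{j_2,j_3\}$ in case $(2,0)$, or $j_1=\ell_1$ in case $(1,1)$ — and observe that these are exactly the trivial configurations already covered by \cite[Lemma 2.1]{GT2}, a reduction the paper makes explicit and your sketch conflates with the generic case.
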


 \begin{proof}
We construct such a sequence $(n_{k})_{k\geq 1}$ by induction. When $K=1$, we can apply  \cite[Lemma 2.4]{GT2} and set any $n_{1}\geq 3$.
Now, assume that we have constructed the first $K$ elements of the sequence $(n_{k})_{k=1}^K$. We will prove that we can choose $n_{K+1}$ satisfying the wanted properties.

Suppose that we have fixed $n_{K+1}$ (and therefore $\mathcal{A}_{K+1}$): we now investigate which arithmetical properties are required in order to satisfy the non-resonance condition.

Let $j_{1},j_{2},j_{3},\ell_{1}\in \mathcal{A}$ and $p_{1},p_{2}\in \mathbb{N}$ so that 
  \begin{equation}\label{eq.double}
\left\{
\begin{aligned}
& p_{1}+p_{2}=j_{1}+j_{2}+j_{3}-\ell_{1}=:S   , \\   
& p^{2}_{1}+p^{2}_{2}=j^{2}_{1}+j^{2}_{2}+j^{2}_{3}-\ell^{2}_{1}=:T  . 
\end{aligned}
\right.
\end{equation}
The two complex (possibly coinciding) solutions $(p_1,p_2)$ to \eqref{eq.double} are the roots of the polynomial
\begin{equation*}
X^2-SX+\frac{1}{2}(S^2-T)\ .
\end{equation*}
The discriminant of this polynomial is $\Delta=2T-S^2$. Therefore, a necessary condition for \eqref{eq.double} to have integer solutions is that $2T-S^2$ is a perfect square.

Each of the elements $j_{1},j_{2},j_{3},\ell_{1}$ may belong either to $\mathcal{A}_{K+1}$ or to $\mathcal{A}_{k}$ with $k\leq K$. We have to distinguish 8 different cases, depending on how many of the $j$'s belong to $\mathcal{A}_{K+1}$ (4 possibilities, from 0 to 3) and whether $\ell_1\in\mathcal{A}_{K+1}$ or not.

$\bullet$ Case (0,0): $j_{1},j_{2},j_{3},\ell_{1}\notin\mathcal{A}_{K+1}$.\\
No further property has to be verified: the non-resonance condition is satisfied by induction hypothesis.

$\bullet$ Case (1,0): $j_{1}\in\mathcal{A}_{K+1}$, $j_{2},j_{3},\ell_{1}\notin\mathcal{A}_{K+1}$.\\
We exploit the identity
\begin{equation*}
\Delta=2(j^{2}_{1}+j^{2}_{2}+j^{2}_{3}-\ell^{2}_{1})-(j_{1}+j_{2}+j_{3}-\ell_{1})^2=(j_{1}-j_{2}-j_{3}+\ell_{1})^2-4(\ell_1-j_2)(\ell_1-j_3) .
\end{equation*}
Now, $j_1$ may be expressed as $j_1=n_{K+1}+c_1$, with $c_1\in\{-2,-1,1,2\}$. Therefore
\begin{equation*}
\Delta=(n_{K+1}+c_1-j_{2}-j_{3}+\ell_{1})^2-4(\ell_1-j_2)(\ell_1-j_3).
\end{equation*}
The relevant thing here is that $\Delta$ has the form $(n_{K+1}+\tilde{c}_1)^2+\tilde{c}_2$. If $\tilde{c}_2=0$, then either $\ell_1=j_2$ or $\ell_1=j_3$, which implies the non-resonance condition $p_1,p_2\in\mathcal{A}$, by \cite[Lemma 2.1]{GT2}. If $\tilde{c}_2\neq 0$, it is sufficient to choose $n_{K+1}$ large enough to prevent $\Delta$ from being a perfect square.

$\bullet$ Case (2,0): $j_{1},j_{2}\in\mathcal{A}_{K+1}$, $j_{3},\ell_{1}\notin\mathcal{A}_{K+1}$.\\
We have $j_1=n_{K+1}+c_1$, $j_2=n_{K+1}+c_2$ with $c_1,c_2\in\{-2,-1,1,2\}$. Thus,
\begin{eqnarray*}
\nonumber \Delta&=&2\left[(n_{K+1}+c_1)^2+(n_{K+1}+c_2)^2+j_3^2-\ell_1^2\right]-\left[(n_{K+1}+c_1)+(n_{K+1}+c_2)+j_3-\ell_1\right]^2=\\
&=&-4(j_3-\ell_1)n_{K+1}+(c_1-c_2)^2-2(c_1+c_2)(j_3-\ell_1)+(j_3-\ell_1)(j_3+3\ell_1) .
\end{eqnarray*}
If $\ell_1=j_3$, then $p_1,p_2\in\mathcal{A}$, by \cite[Lemma 2.1]{GT2}. If $\ell_1\neq j_3$, then $\Delta$ has the form $\a n_{K+1}+\b$, with $\a\neq 0$.

$\bullet$ Case (3,0): $j_{1},j_{2},j_{3}\in\mathcal{A}_{K+1}$, $\ell_{1}\notin\mathcal{A}_{K+1}$.\\
The conditions $n_{1}\geq 3$,  $n_{k+1}\geq 12 n^{2}_{k}$ imply that if $k_1\neq k_2$ then $|n_{k_1}-n_{k_2}|\geq 105=12\cdot 3^2-3$. Therefore we exploit the translation invariance of the resonance condition and we translate back $(j_{1},j_{2},j_{3},\ell_{1},p_{1},p_{2})$ obtaining the new resonant sextuple
\[(\tilde j_{1},\tilde j_{2},\tilde j_{3},\tilde\ell_{1},\tilde p_{1},\tilde p_{2}):=(j_{1}-n_{K+1},j_{2}-n_{K+1},j_{3}-n_{K+1},\ell_{1}-n_{K+1},p_{1}-n_{K+1},p_{2}-n_{K+1})\]
with $|\tilde j_1|,|\tilde j_2|,|\tilde j_3|\leq 2$ and $|\tilde\ell_1|\geq 105-2=103$, which is clearly absurd since
\[\tilde j_1^2+\tilde j_2^2+\tilde j_3^2=\tilde\ell^{2}_{1}+\tilde p^{2}_{1}+\tilde p^{2}_{2} .\]

$\bullet$ Case (0,1): $\ell_{1}\in\mathcal{A}_{K+1}$, $j_{1},j_{2},j_{3}\notin\mathcal{A}_{K+1}$.\\
This case is easily seen to be absurd, since
\[j_1^2+j_2^2+j_3^2=\ell^{2}_{1}+p^{2}_{1}+p^{2}_{2}\]
and $\ell_1$ is much bigger than $j_1,j_2,j_3$.

$\bullet$ Case (1,1): $j_{1},\ell_{1}\in\mathcal{A}_{K+1}$, $j_{2},j_{3}\notin\mathcal{A}_{K+1}$.\\
We write $j_1=n_{K+1}+c_1$, $\ell_1=n_{K+1}+c_2$ with $c_1,c_2\in\{-2,-1,1,2\}$. So we have
\begin{eqnarray*}
\nonumber \Delta&=&2\left[(n_{K+1}+c_1)^2+j_2^2+j_3^2-(n_{K+1}+c_2)^2\right]-\left[(n_{K+1}+c_1)+j_2+j_3-(n_{K+1}+c_2)\right]^2=\\
&=&4(c_1-c_2)n_{K+1}+(c_1^2+j_2^2+j_3^2-c_2^2)-(c_1+j_2+j_3-c_2)^2 .
\end{eqnarray*}
If $c_1=c_2$, then $j_1=\ell_1$ and $p_1,p_2\in\mathcal{A}$ because of \cite[Lemma 2.1]{GT2}. Otherwise, $\Delta$ has the form $\a n_{K+1}+\b$, with $\a\neq 0$.

$\bullet$ Case (2,1): $j_{1},j_{2},\ell_{1}\in\mathcal{A}_{K+1}$, $j_{3}\notin\mathcal{A}_{K+1}$.\\
We have $j_1=n_{K+1}+c_1$, $j_2=n_{K+1}+c_2$, $\ell_1=n_{K+1}+c_3$ with $c_1,c_2,c_3\in\{-2,-1,1,2\}$. Therefore
\begin{eqnarray*}
\nonumber \Delta&=&2\left[(n_{K+1}+c_1)^2+(n_{K+1}+c_2)^2+j_3^2-(n_{K+1}+c_3)^2\right]-(n_{K+1}+c_1+c_2+j_3-c_3)^2=\\
&=&(n_{K+1}+c_1+c_2-c_3-j_3)^2-4(c_3-c_1)(c_3-c_2) ,
\end{eqnarray*}
which has the same structure as for the case (1,0) and therefore $\Delta$ is not a perfect square provided that $n_{K+1}$ is large enough.

$\bullet$ Case (3,1): $j_{1},j_{2},j_{3},\ell_{1}\in\mathcal{A}_{K+1}$.\\
Then we have directly $p_1,p_2\in\mathcal{A}_{K+1}\subset\mathcal{A}$, by \cite[Lemma 2.4]{GT2}.\\

Now, what still has to be proved is that we can choose $n_{K+1}$ arbitrarily large and such that $\a_rn_{K+1}+\b_r$ is not a square for a finite number of couples of integers $\{(\a_r,\b_r)\}_{r=1}^s$. We can limit ourselves to $\a_r>0$, since the conditions to be satisfied yield $\a_r\neq 0$ and, if $\a_r<0$, then $\a_rn_{K+1}+\b_r$ is negative and therefore not a perfect square, for $n_{K+1}$ large enough. Let $\a:=(\a_1,\ldots,\a_s)\in(\mathbb{N}\setminus\{0\})^s$ and $\b:=(\b_1,\ldots,\b_s)\in\mathbb{Z}^s$. We denote by $S_{\a\b}:=\{(\a_r,\b_r)\}_{r=1}^s$ the set of all couples $(\a_r,\b_r)$, for a given choice of $\a\in(\mathbb{N}\setminus\{0\})^s$ and $\b\in\mathbb{Z}^s$.

\begin{definition}
We say that a positive integer $n\in\mathbb{N}$ satisfies the ``no-square condition'' with respect to $S_{\a\b}$ (NSC-$S_{\a\b}$) if for all $r=1\ldots s$, $\a_rn+\b_r$ is not a perfect square.
\end{definition}

Fix $\a\in(\mathbb{N}\setminus\{0\})^s$, $\b\in\mathbb{Z}^s$. Let $F_N$ be the number of positive integers $1\leq n\leq N$ which satisfy (NSC-$S_{\a\b}$).

Consider a single couple $(\a_r,\b_r)$: the main result in \cite{BZ02} implies that there exist two universal constants $C_1,C_2$ (which do not depend on $\a_r,\b_r$), such that the number of positive integers $1\leq n\leq N$ such that $\a_rn+\b_r$ is a perfect square is at most $C_1N^{3/5}(\ln N)^{C_2}$. Now, a positive integer $n$ fails to satisfy (NSC-$S_{\a\b}$) if and only if $\a_rn+\b_r$ is a perfect square for at least one of the $s$ couples $(\a_r,\b_r)$. Therefore, we deduce that the number of positive integers $\leq N$ which fail to satisfy (NSC-$S_{\a\b}$) is
\begin{equation*}
N-F_N\leq C_1sN^{3/5}(\ln N)^{C_2}.
\end{equation*}
Hence, we have
\begin{equation*}
N-C_1sN^{3/5}(\ln N)^{C_2}\leq F_N\leq N
\end{equation*}
which implies that $F_N$ is asymptotic to $N$ as $N\to +\infty$. In particular, this implies that there are infinitely many positive integers satisfying (NSC-$S_{\a\b}$), which in turn implies that one can choose $n_{K+1}$ arbitrarily large and satisfying (NSC-$S_{\a\b}$). This concludes the proof of Proposition \ref{prop.nonres}.
\end{proof}

\section{The model equation}\label{Sect3}

 Recall that 
\begin{equation*} 
\mathcal{A}_{k}=\big\{n_{k}-2,n_{k}-1,n_{k}+1,n_{k}+2\big\}, \quad \A=\bigcup_{k=1}^{K}\A_{k},
\end{equation*}
and  the notation \eqref{1.3}.  We assume that $(n_{k})_{k\geq 1}$ is a sequence which satisfies Lemma \ref{lem.couple} and Proposition \ref{prop.nonres}.

Next, we set $\eps=\nu^{1/4}$ and make the change of unknown $v=\eps u$. Therefore $v$ satisfies   
\begin{equation}\label{cauchy*} 
\left\{
\begin{aligned}
&i\partial_t v+\partial_{x}^{2}v  =|v|^{4}v,\quad
(t,x)\in\R\times {\T}^{},\\
&v(0,x)= v_{0}(x)=\eps u_{0}(x).
\end{aligned}
\right.
\end{equation} 
We  expand $v$ and $\bar v$ in Fourier modes
$$v(x)=\sum_{j\in \Z}\xi_j e^{ijx},\quad \bar v(x)=\sum_{j\in\Z} \eta_j e^{-ijx},$$
and denote by $\dis H=\int_{\mathbb{S}^{1}}|\partial_{x}v|^{2}+\frac13 \int_{\mathbb{S}^{1}}|v|^{6}$ the Hamiltonian of \eqref{cauchy*}.

We now refer to \cite[Section 3]{GT2}, and in particular to Proposition 3.1 and Proposition 3.3. It is proven that, thanks to a Birkhoff normal form procedure, that there exists a symplectic change of coordinates $\tau$ close to the identity so that
\begin{equation}\label{ham*}
\ov{H}:=H\circ \tau = N +Z_{6}+R_{10},
\end{equation}
where 
\begin{itemize}
\item $N$ only depends on the actions $(I_{k})_{k\in \Z}$ ;
\item  $\dis Z_6$  is the homogeneous polynomial of degree 6 ;     
\begin{equation*}
Z_{6}=\sum_{\mathcal{R}}\xi_{j_{1}}\xi_{j_{2}}\xi_{j_{3}}\eta_{\ell_{1}}\eta_{\ell_{2}}\eta_{\ell_{3}}.
\end{equation*}
\item  $R_{10}$ is a remainder of order 10.
\end{itemize}
~

As in \cite[Section 4]{GT2}, we  introduce the model system by setting $\xi^{0}_{j}=\eta^{0}_{j}=0$ in \eqref{ham*} when $j\neq \mathcal{A}$. This induces here the Hamiltonian  
\begin{equation*} 
\wh{H}=6J^{3}+\sum_{k=1}^{K}\wh{H}_{k}, 
\end{equation*}
where 
\begin{equation*}
\wh{H}_{k}=\sum_{j\in \mathcal{A}_{k}}j^{2}I_{j}-9J\sum_{j \in \mathcal{A}_{k}}I^{2}_{j}+4\sum_{j\in \mathcal{A}_{k}}I^{3}_{j}+18I_{a^{(k)}_{2}}^{1/2}I_{b^{(k)}_{2}}^{1/2}I_{a^{(k)}_{1}}I_{b^{(k)}_{1}}\cos (2\phi^{(k)}_{0}), 
\end{equation*}
with $\phi^{(k)}_{0} =\theta_{a^{(k)}_{1}}-\theta_{b^{(k)}_{1}}+\frac12\theta_{a^{(k)}_{2}}-\frac12\theta_{b^{(k)}_{2}}$. \\
 
Since   $\wh{H}$ is almost decoupled, we obtain a completely integrable system.

\begin{lemm}
The system given by $\wh{H}$ is completely integrable.
\end{lemm}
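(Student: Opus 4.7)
The plan is to exhibit $4K$ functionally independent, Poisson-commuting integrals on the $8K$-dimensional phase space $(I_j,\theta_j)_{j\in\A}$. Complete integrability will then follow by Liouville--Arnold, since $\wh{H} = 6J^3 + \sum_k \wh{H}_k$ lies in the algebra generated by them (one has $J = \sum_k(M_k+N_k)$ in the notation below).

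For each cluster $k=1,\ldots,K$, I take the three linear action combinations
\begin{equation*}
M_k = I_{a^{(k)}_1} + I_{b^{(k)}_1},\qquad N_k = I_{a^{(k)}_2} + I_{b^{(k)}_2},\qquad P_k = I_{a^{(k)}_1} - 2 I_{a^{(k)}_2},
\end{equation*}
plus the cluster Hamiltonian $\wh{H}_k$, giving $4K$ integrals. Within $\A_k$, the coefficient vectors of $M_k,N_k,P_k$ are orthogonal to the gradient $(\tfrac12,1,-\tfrac12,-1)$ of $\phi^{(k)}_0$, so the only $\theta$-dependence of $\wh{H}_k$ (through $\cos(2\phi^{(k)}_0)$) immediately gives $\{M_k,\wh{H}_k\}=\{N_k,\wh{H}_k\}=\{P_k,\wh{H}_k\}=0$. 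The analogous brackets with $l\neq k$ vanish trivially since $\wh{H}_k$ has no $\theta_j$-dependence for $j\in\A_l$, and the $3K$ action integrals commute with one another automatically.

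The main computation is the cross-cluster identity $\{\wh{H}_k,\wh{H}_l\}=0$ for $k\neq l$. Writing each $\wh{H}_k=A_k(I)+B_k(I)\cos(2\phi^{(k)}_0)$ and expanding the bracket into four pieces, three of them vanish for elementary reasons: cosines are action-independent, and $B_k,B_l$ depend on actions in disjoint clusters. The remaining contribution $\{A_k,B_l\cos(2\phi^{(l)}_0)\}+\{B_k\cos(2\phi^{(k)}_0),A_l\}$ is killed by the zero-sum identity $\sum_{j\in\A_l}\partial_{\theta_j}\phi^{(l)}_0=\tfrac12+1-\tfrac12-1=0$, together with the observation that $A_k$ couples to cluster $l$ only through $J=\sum_j I_j$, so $\partial_{I_j}A_k$ is constant in $j\in\A_l$. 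Functional independence then holds on the generic open set where every $\sin(2\phi^{(k)}_0)\neq 0$: the $3K$ linear action gradients are independent by construction, and each $d\wh{H}_k$ has a nonzero component along $d\phi^{(k)}_0$ which no other integral contributes. I do not expect a significant obstacle here; all nontrivial brackets reduce to the same elementary zero-sum identity that already made the $K=1$ case work in \cite{GT2}.
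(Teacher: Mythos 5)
Your proof is correct and is essentially the paper's argument made explicit: your integrals $M_k,N_k,P_k,\wh{H}_k$ are a linear recombination of the paper's $K^{(k)}_1,K^{(k)}_2,K^{(k)}_{1/2},\wh{H}_k$ (indeed $P_k=2K^{(k)}_{1/2}-2K^{(k)}_2$), and the paper simply invokes \cite[Lemma 4.1]{GT2} together with the conservation of $J$. You additionally write out the cross-cluster check $\{\wh{H}_k,\wh{H}_l\}=0$ via the zero-sum identity $\sum_{j\in\A_l}\partial_{\theta_j}\phi^{(l)}_0=0$, a detail the paper leaves implicit in the phrase ``since $J$ is a constant of motion.''
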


\begin{proof}
Since  $J$ is a constant of motion, this is a direct consequence of \cite[Lemma 4.1]{GT2}. Indeed, for all $1\leq k\leq K$, $\wh{H}_{k}$ and 
\begin{equation*}
K^{(k)}_{1}=I_{a^{(k)}_{1}}+I_{b^{(k)}_{1}},\quad K^{(k)}_{2}=I_{a^{(k)}_{2}}+I_{b^{(k)}_{2}} \;\;\text{and}\;\;K^{(k)}_{1/2}=I_{b^{(k)}_{2}}+\frac12 I_{a^{(k)}_{1}},
\end{equation*} 
are constants of motion in involution.
\end{proof}

 As in \cite{GT2}, we use the coordinates $(\phi,K)$ to describe some particular trajectories of $\wh{H}$. To begin with, $\wh{H}_{k}$ can be rewritten
\begin{multline*}
\wh{H}_{k}=\wh{H}_{k}(\phi^{(k)}_{0},K^{(k)}_{0},K^{(k)}_{1},K^{(k)}_{2},K^{(k)}_{1/2})\\
\begin{aligned}
&=F_{k}(K^{(k)}_{1},K^{(k)}_{2},K^{(k)}_{1/2})+6\big[(3J-2K_{1})I_{a_{1}}I_{b_{1}}+(3J-2K_{2})I_{a_{2}}I_{b_{2}}+3I_{a_{2}}^{\frac12}I_{b_{2}}^{\frac12}I_{a_{1}}I_{b_{1}}\cos(2\phi_{0})\big],
\end{aligned}
\end{multline*}
for some  polynomial $F_{k}$\footnote{The expression of $F_{k}$ which can be found in \cite{GT2} is incomplete and it lacks the dependence of $F_k$ on $K^{(k)}_{1/2}$. However, this does not affect the results in \cite{GT2}.}. Next, we define
\begin{equation}\label{def.eps}
\eps_{k}=\eps \e^{-n_{k}},
\end{equation}
we fix
\begin{equation*}
K^{(k)}_{1}=\eps^{2}_{k},\quad K^{(k)}_{2}=\frac12\eps^{2}_{k}\; \;\;\text{and}\;\;\;K^{(k)}_{1/2}=\frac12\eps^{2}_{k},
\end{equation*} 
and denote by  $K^{(k)}_{0}=I_{a^{(k)}_{1}}$. With the previous choice, 
\begin{equation}\label{def.J}
J=\sum_{k\geq 1}(K^{(k)}_{1}+K^{(k)}_{2})=\frac32 \eps^{2}\sum_{k\geq1} \e^{-2n_{k}}=C\eps^{2}.
\end{equation}
The following rescaling proves to be useful
\begin{equation}\label{renormal}
\phi^{(k)}_{0}(t)=\phi^{(k)}(\eps^{4}_{k}t),\quad K^{(k)}_{0}(t)=\eps^{2}_{k}K^{(k)}(\eps^{4}_{k}t).
\end{equation} 
Define
\begin{equation*}\label{hat*}
H^{(k)}_{\star}=\frac94K^{(k)}(1-K^{(k)})\Big[(10J\eps^{-2}_{k}-6)+4(K^{(k)})^{\frac12}(1-K^{(k)})^{\frac12}\cos(2\phi^{(k)})\Big],
\end{equation*}
and 
\begin{equation}\label{def.Hstar}
H_{\star}=\sum_{k=1}^{K}H^{(k)}_{\star},
\end{equation}
then for all $1\leq k\leq K$, the evolution of $(\phi^{(k)},K^{(k)})$ is given by
\begin{equation} \label{syst*}
\left\{\begin{array}{rcl}
\dot \phi^{(k)}&=\dis -\frac{\partial {H_{\star}}}{\partial K^{(k)}}&=-\frac{27}4(1-2K^{(k)})\Big[(\frac{10}3J\eps^{-2}_{k}-2)+2(K^{(k)})^{\frac12}(1-K^{(k)})^{\frac12}\cos(2\phi^{(k)})\Big] \\[10pt]
\dot K^{(k)}   &=\dis \frac{\partial {H_{\star}}}{\partial \phi^{(k)}}&=-18(K^{(k)})^{\frac32}(1-K^{(k)})^{\frac32}\sin(2\phi^{(k)}).
\end{array}\right.
\end{equation}

Then by \cite[Proposition 4.2]{GT2} we have
\begin{prop}\label{prop.mar}~
For all $k\geq 1$, there exists a $\gamma_{k}\in (0,1/2)$ so that if $\gamma_{k}<K^{(k)}(0)<1-\gamma_{k}$ and $\phi(0)=0$, then there is $T_{k}>0$ so that $(\phi^{(k)},K^{(k)})$ is a $2T_{k}-$periodic solution of \eqref{syst*} and 
\begin{equation*} 
K^{(k)}(0)+K^{(k)}(T_{k})=1.
\end{equation*}  
Moreover, $ T_{k} : (\gamma_{k},1/2)\cup(1/2,1-\gamma_{k}) \longrightarrow \R$ is a continuous function of $K^{(k)}(0)$, 
\begin{equation*}
T_{k}\longrightarrow\frac{2\pi}{9}(10 J\eps^{-2}_{k}-3)^{-1/2}\quad \text{as}\quad K^{(k)}(0) \longrightarrow 1/2,
\end{equation*}
and 
\begin{equation*}
T_{k}\longrightarrow +\infty\quad \text{as}\quad K^{(k)}(0) \longrightarrow \gamma_{k}.
\end{equation*}
\end{prop}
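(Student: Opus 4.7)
The plan is to reduce the statement to its single-cluster analogue \cite[Proposition 4.2]{GT2} by exploiting the decoupling of $H_{\star}$. Indeed, by \eqref{def.Hstar} we have $H_{\star}=\sum_{k=1}^{K}H_{\star}^{(k)}$, and each summand $H_{\star}^{(k)}$ depends only on the conjugate pair $(\phi^{(k)},K^{(k)})$; the quantity $J$ defined in \eqref{def.J} is a constant of motion and enters each $H_{\star}^{(k)}$ only as the parameter $10J\eps_{k}^{-2}$. Consequently the system \eqref{syst*} splits into $K$ independent one-degree-of-freedom Hamiltonian systems, and it suffices to prove the statement for each $k$ separately.

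I would first identify $(\phi,K)=(0,1/2)$ as an equilibrium of the $k$-th subsystem (both right-hand sides of \eqref{syst*} vanish there) and linearize $H_{\star}^{(k)}$ around it: expanding in $\phi$ and $K-1/2$ yields a quadratic form of the type $\mathrm{const}-\tfrac{9}{4}\phi^{2}-\tfrac{9}{4}(10J\eps_{k}^{-2}-3)(K-1/2)^{2}$, i.e.\ a harmonic oscillator of squared frequency $\tfrac{81}{4}(10J\eps_{k}^{-2}-3)$. After accounting for the factor of two produced by the symmetry-induced half-period identity $K^{(k)}(T_{k})=1-K^{(k)}(0)$ established below, this gives the announced asymptotics $T_{k}\to\frac{2\pi}{9}(10J\eps_{k}^{-2}-3)^{-1/2}$ as $K^{(k)}(0)\to 1/2$.

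Next, I would establish the identity $K^{(k)}(0)+K^{(k)}(T_{k})=1$ from the two symmetries
\begin{equation*}
H_{\star}^{(k)}(-\phi,K)=H_{\star}^{(k)}(\phi,K),\qquad H_{\star}^{(k)}(-\phi,1-K)=H_{\star}^{(k)}(\phi,K),
\end{equation*}
both of which are immediate from the explicit formula for $H_{\star}^{(k)}$. The first gives a reversibility of the flow, so that a trajectory starting from $\phi(0)=0$ is invariant under the composition of $t\mapsto -t$ and $\phi\mapsto -\phi$. Combined with the second symmetry, this forces the orbit through $(0,K^{(k)}(0))$ to cross the axis $\phi=0$ again at $(0,1-K^{(k)}(0))$, which gives the identity upon setting $T_k$ equal to that half-return time.

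Finally, I would define $\gamma_{k}$ by analyzing the phase portrait of $H_{\star}^{(k)}$ on the cylinder $\R/\pi\Z\times[0,1]$: for $\gamma_{k}<K^{(k)}(0)<1-\gamma_{k}$, the connected component of the level set $\{H_{\star}^{(k)}=H_{\star}^{(k)}(0,K^{(k)}(0))\}$ through $(0,K^{(k)}(0))$ is a simple closed curve and therefore a periodic orbit, whereas $\gamma_{k}$ corresponds to the separatrix through the hyperbolic equilibria of $H_{\star}^{(k)}$. Continuity of $T_{k}$ in $K^{(k)}(0)$ and the divergence $T_{k}\to+\infty$ as $K^{(k)}(0)\to\gamma_{k}$ then follow from standard planar-Hamiltonian arguments. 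The main obstacle lies in the explicit identification of the saddle points and of the associated separatrix, which is needed in order to verify that the level set through $(0,K^{(k)}(0))$ is indeed a closed orbit for $K^{(k)}(0)$ close to $1/2$; this is precisely the computation carried out in \cite[Section 4]{GT2}, and since for each $k$ the Hamiltonian $H_{\star}^{(k)}$ has the same functional form as the one studied there (only the value of $10J\eps_{k}^{-2}$ differs), the same argument applies verbatim.
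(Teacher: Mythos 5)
Your proposal follows essentially the same route as the paper's proof: decouple $H_{\star}$ into the one-degree-of-freedom subsystems, identify $(0,1/2)$ as a nondegenerate centre and read off the limiting half-period from the Hessian of $H_{\star}^{(k)}$ there, and obtain $\gamma_k$ and the divergence of $T_k$ from the separatrix emanating from the saddles. Your Hessian computation, the factor-of-two bookkeeping between the full period $2T_k$ and the linearized period, and the symmetry argument giving $K^{(k)}(0)+K^{(k)}(T_k)=1$ are all correct, and the last of these is in fact spelled out more explicitly than in the paper, which simply refers to \cite[Proposition 4.2]{GT2}.

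The one step you should not defer ``verbatim'' to \cite{GT2} is the verification that, for every $k$, the cluster-dependent parameter $C_k=10J\eps_k^{-2}-6$ lies in the range for which the phase portrait actually has the claimed structure (hyperbolic points at $(\pm\pi/2,1/2)$, centre at $(0,1/2)$, and the intersection of the separatrix with $\{\phi=0\}$ occurring at some $K^{1/2}(1-K)^{1/2}=D_k\in(0,1/2)$, which is what produces $\gamma_k\in(0,1/2)$). In the single-cluster situation of \cite{GT2} this is a fixed constant; here it varies with $k$, and the required uniform lower bound $C_k\ge 9$ follows from \eqref{def.J}, since $10J\eps_k^{-2}=15\,\e^{2n_k}\sum_{m\ge1}\e^{-2n_m}\ge 15$. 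Without this observation, the claim that the same argument ``applies verbatim'' for all $k$ is not justified. Adding this one-line check closes the only gap and brings your argument in line with the paper's proof.
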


\begin{proof}
Denote by $C_{k}=10 J\eps^{-2}_{k}-6$. With \eqref{def.J}, we have $C_{k}\geq 9$ and $\dis C_{k}\sim c\e^{2n_{k}}$. Write here $K^{(k)}=K$ and $\phi^{(k)}=\phi$. Then, as for the proof of \cite[Proposition 4.2]{GT2},
\begin{equation*} 
K(1-K)\big(C_{k}+4K^{\frac12}(1-K)^{\frac12}\cos(2\phi)\big)=\frac{1}{4}(C_{k}-2)
\end{equation*}
defines two heterocline orbits which link the saddle points $(\phi,K)=(-\pi/2,1/2)$ and $(\phi,K)=(-\pi/2,1/2)$.
When $\phi=0$
\begin{equation*}
K^{\frac12}(1-K)^{\frac12}=D_{k}:=\frac{C_{k}-2}{\sqrt{(C_{k}-2)^{2}+8(C_{k}-2)}+C_{k}-2}.
\end{equation*}
Observe that, since $C_k\geq9>2$, $D_{k} \in (0,1/2)$, and this yields the existence of $\gamma_{k}$.

Moreover, $T_{k}$ is a continuous function of $K^{(k)}(0)$ on $(\gamma_{k},1/2)\cup(1/2,1-\gamma_{k})$ and $T_{k}\longrightarrow +\infty$ when $K^{(k)}(0) \longrightarrow \gamma_{k}$, due to the continuous dependence on initial conditions.

Finally, since the point $(\phi,K)=(0,1/2)$ is a nondegenerate centre, we have that $T_k$ approaches the value of the half-period of the linearized system as $K^{(k)}(0) \longrightarrow 1/2$ (it follows from the Theorem in Chapter 5.C, p.100, \cite{Arn89}). Since, in particular, we have
\begin{equation*} 
\frac{\partial^2 H_{\star}}{\partial K^2}(0,1/2)=-\frac{9}{2}(C_k+3), \qquad \frac{\partial^2 H_{\star}}{\partial\phi^2}(0,1/2)=-\frac{9}{2}, \qquad \frac{\partial^2 H_{\star}}{\partial K\partial\phi}(0,1/2)=0,
\end{equation*}
we deduce
\begin{equation}\label{limit.period}
\lim_{K^{(k)}(0) \longrightarrow 1/2}T_k=\frac{2\pi}{9\sqrt{C_k+3}}.
\end{equation}

\end{proof}

\section{The perturbation analysis and proof of the main results}\label{Sect4}
We are now ready to complete the proof of Theorem \ref{thm1}. This is a direct application of the results of \cite[Section 5]{GT2}.\\
We now assume that   $n_{K}\leq -(\ln \eps)/8=-(\ln \nu)/32$, therefore by \eqref{def.eps} we have for all $k\geq 1$
\begin{equation*}
\eps^{9/8}\leq \eps_{k}\leq \eps.
\end{equation*}
Then we can state the following result, which is  analogous to \cite[Lemma 5.1]{GT2}.

 \begin{lemm}\label{lem.dyn}
 Assume that there exists $C>0$ so that 
 \begin{equation*} 
| \xi_{j}(0)|,\;|\eta_{j}(0)|\leq C \eps \e^{-n_{k}},\;\forall \,j\in \mathcal{A}_{k}
 \end{equation*}
 and 
  \begin{equation*} 
|\xi_{p}(0)|,\;|\eta_{p}(0)|\leq C\eps^{3} \e^{-|p| },\;\forall \,p\not\in \mathcal{A}.
  \end{equation*}
 
Then for all $0\leq t \leq C\eps^{-6}$, 
 \begin{equation*} 
 I_{p}(t)=\mathcal{O}(\eps^{6})\;\;\text{when}\;\; p\not\in \mathcal{A},
 \end{equation*}
and
  \begin{eqnarray*}
 K^{(k)}_{1}(t)&=&K^{(k)}_{1}(0)+\mathcal{O}(\eps^{10 })t \\
 K^{(k)}_{2}(t)&=&K^{(k)}_{2}(0)+\mathcal{O}(\eps^{10 })t\\
 K^{(k)}_{1/2}(t)&=&K^{(k)}_{1/2}(0)+\mathcal{O}(\eps^{10 })t.
 \end{eqnarray*}
 \end{lemm}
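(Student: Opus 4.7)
The argument closely follows \cite[Lemma 5.1]{GT2}, adapted to the multi-cluster setting. I would work throughout in the Birkhoff coordinates in which the Hamiltonian reads $\ov H = N + Z_6 + R_{10}$; since the change $\tau$ is close to the identity, the initial bounds transfer to these coordinates up to a universal constant. The plan is to run a bootstrap on the interval $[0, C\eps^{-6}]$: assume that the claimed estimates hold with an inflated constant $C^\star$, and recover them with a strictly smaller one.

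The key algebraic decomposition is $Z_6 = Z_6^{\mathrm{in}} + Z_6^{\mathrm{out}}$, where $Z_6^{\mathrm{in}}$ collects those resonant sextuples all of whose six indices lie in a single $\A_k$ (by Lemma \ref{lem.couple} there are no in-$\A$ resonant sextuples that straddle two different clusters), and $Z_6^{\mathrm{out}}$ collects the remaining ones. Since each $K^{(k)}_i$ is a sum of actions, $\{K^{(k)}_i, N\} = 0$, and \cite[Lemma 4.1]{GT2} applied cluster by cluster gives $\{K^{(k)}_i, Z_6^{\mathrm{in}}\} = 0$. Therefore
\begin{equation*}
\dot K^{(k)}_i = \{K^{(k)}_i, Z_6^{\mathrm{out}}\} + \{K^{(k)}_i, R_{10}\}.
\end{equation*}
The decisive input is Proposition \ref{prop.nonres}: a resonant sextuple with exactly one index outside $\A$ would leave five in $\A$, from which (using the $j \leftrightarrow \ell$ symmetry of $\mathcal{R}$) one can always extract a subset of the form $\{j_1, j_2, j_3, \ell_1\} \subset \A$, and Proposition \ref{prop.nonres} would then force the remaining two indices into $\A$, a contradiction. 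Hence every monomial of $Z_6^{\mathrm{out}}$ carries at least two out-of-cluster factors, each bounded by $C^\star \eps^3$ times an analytic weight, and summation against the exponential weights gives $|\{K^{(k)}_i, Z_6^{\mathrm{out}}\}| = \mathcal{O}(\eps^{10})$; the remainder $R_{10}$ contributes $\mathcal{O}(\eps^{10})$ by \cite[Proposition 3.3]{GT2}. Integrating over $[0, t]$ yields the announced $\mathcal{O}(\eps^{10}) t$ drift.

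For $p \notin \A$, one has $\{I_p, N\} = \{I_p, Z_6^{\mathrm{in}}\} = 0$ (the latter because $Z_6^{\mathrm{in}}$ does not involve $\xi_p$ or $\eta_p$), so only $Z_6^{\mathrm{out}}$ and $R_{10}$ drive $I_p$. Monomials of $Z_6^{\mathrm{out}}$ containing the index $p$ carry a second outside factor by the same non-resonance argument, and the bootstrap bounds combined with the analytic weight $e^{-|p|}$ give $|\dot I_p| = \mathcal{O}(\eps^{10})$ with the correct $p$-localisation. Integration over the time interval, together with a standard continuation argument, closes the bootstrap by choosing $C^\star$ large enough in terms of $C$.

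The main difficulty is the bookkeeping of the resonance-constrained sums against the anisotropic exponential weights $e^{-n_k}$ inside each cluster and $e^{-|p|}$ outside $\A$; the uniformity in $K$ is recovered thanks to the rapid growth $n_{k+1} \geq 12 n_k^2$ from Lemma \ref{lem.couple}, which keeps the sums over cluster configurations convergent independently of $K$.
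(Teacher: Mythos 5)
Your overall strategy matches the intended one: the paper itself gives no proof of Lemma~\ref{lem.dyn} and simply cites \cite[Lemma 5.1]{GT2}, and your reconstruction—split $Z_6 = Z_6^{\mathrm{in}} + Z_6^{\mathrm{out}}$ using Lemma~\ref{lem.couple} to put all non-trivial in-$\mathcal A$ sextuples inside a single cluster, invoke \cite[Lemma 4.1]{GT2} cluster by cluster to kill $\{K^{(k)}_i, Z_6^{\mathrm{in}}\}$, and use Proposition~\ref{prop.nonres} (with the $j\leftrightarrow\ell$ symmetry) to show that every $Z_6^{\mathrm{out}}$ monomial carries at least two out-of-$\mathcal A$ factors—is exactly the multi-cluster adaptation one wants. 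The resulting $\mathcal O(\eps^{10})$ drift of the $K^{(k)}_i$ is correct, and the $R_{10}$ contribution is indeed of lower order.

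There is, however, a real gap in the $I_p$ part. You assert that $|\dot I_p| = \mathcal O(\eps^{10})$ and that ``integration over the time interval... closes the bootstrap'', but over $t\le C\eps^{-6}$ a bound $|\dot I_p|\lesssim \eps^{10}$ only yields $I_p(t) = I_p(0) + \mathcal O(\eps^{4})$, which overshoots the announced $\mathcal O(\eps^{6})$ by two powers of $\eps$. Moreover, the situation does not obviously improve if one Gronwalls on $\sqrt{I_p}$ instead: $\{I_p, Z_6^{\mathrm{out}}\}$ always factors through a remaining $\sqrt{I_p}$ (or $\sqrt{I_q}$), so one gets roughly $\frac{d}{dt}\sqrt{I_p}\lesssim \sqrt{I_q}\,\eps^4$, which over $\eps^{-6}$ still accumulates to something much larger than the initial $\sqrt{I_p(0)}\lesssim \eps^3 e^{-|p|}$ when $n_1$ is fixed and $\eps$ is small. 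Note also that the ``at least two outside'' count does not rule out genuine monomials with exactly two outside indices and all four inside indices in $\A_1$: for instance $(j_1,j_2,j_3,\ell_1,\ell_2,\ell_3) = (n_1+2,\, n_1+1,\, n_1-3,\, n_1-1,\, n_1-2,\, n_1+3)$ lies in $\mathcal R$, has four indices in $\A_1$ and two outside, and is the kind of term that makes $\dot I_{n_1-3}$ of size $\eps^{10}$ up to $n_1$-dependent constants. Your phrase ``the correct $p$-localisation'' hints at the right idea, but you would need to make precise which anisotropic weight you propagate and verify quantitatively that the exponential factors $e^{-|p|}, e^{-|q|}$ and the cluster weights $e^{-n_k}$ actually deliver the two missing powers of $\eps$; as written the bootstrap for $I_p$ does not visibly close at the claimed $\eps^6$ level on the time scale $\eps^{-6}$, and this is exactly the step for which one must go back to \cite[Lemma 5.1]{GT2}.
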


We consider the  initial conditions
 \begin{equation} \label{inic}
\begin{array}{l}
\dis K^{(k)}_{1}(0)=\eps_{k}^{2},\;\;K^{(k)}_{2}(0)=\frac12\eps_{k}^{2},\;\;K^{(k)}_{1/2}(0)=\frac12\eps_{k}^{2}, \\[8pt]
\text{and} \;\;|\xi_{j}(0)|, |\eta_{j}(0)| \leq C\eps^{3}\e^{-|j|}\;\; \text{for} \;\;j\notin \mathcal{A},
\end{array} 
\end{equation}
and for all $1\leq k\leq K$ we set $\tau_{k}=\eps^{4}_{k}t$. Then thanks to Lemma \ref{lem.dyn} we have 
\begin{prop}\label{prop.4.2} Consider the solution of the Hamiltonian system given by $\ov{H}$  with the initial conditions \eqref{inic}. Then for all $1\leq k\leq K$, $(\phi^{(k)},K^{(k)})$ defined by \eqref{renormal} satisfies for $0\leq \tau_{k} \leq \eps^{-6}\eps^{4}_{k}$
\begin{equation*} 
\left\{\begin{array}{rr}
\dot \phi^{(k)}(\tau_{k})=&-\frac{\partial H_{\star}}{\partial K^{(k)}}+\mathcal{O}(\eps^{2})\\[5pt]
\dot K^{(k)}(\tau_{k})=&\frac{\partial H_{\star}}{\partial \phi^{(k)}} +\mathcal{O}(\eps^{2})  ,
\end{array}\right. 
\end{equation*}
where $H_{\star}$ is the Hamiltonian \eqref{def.Hstar}.
\end{prop}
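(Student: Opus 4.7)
The statement gives a cluster-wise rescaled form of Hamilton's equations for $\ov H$, with the reduced one-degree-of-freedom dynamics driven by $H_\star$ modulo a uniform $\mathcal{O}(\eps^2)$ error. The plan is to follow \cite[Proposition~5.2]{GT2}, the new point being that all the clusters $\A_1,\dots,\A_K$ must be handled simultaneously and uniformly in $K$. I would first work in action-angle coordinates $(I_j,\theta_j)$ and, for each $k$, introduce the canonical reduction exchanging $(\theta_{a_1^{(k)}},\theta_{b_1^{(k)}},\theta_{a_2^{(k)}},\theta_{b_2^{(k)}})$ and their conjugate actions for $(\phi^{(k)}_0,K^{(k)}_1,K^{(k)}_2,K^{(k)}_{1/2})$ and their conjugate variables, as in \cite[\S4]{GT2}. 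Under the flow of $\wh H_k$ alone, $K^{(k)}_1,K^{(k)}_2,K^{(k)}_{1/2}$ are Poisson-commuting constants of motion; freezing them at $(\eps_k^2,\eps_k^2/2,\eps_k^2/2)$ and applying the rescaling \eqref{renormal} yields exactly the system \eqref{syst*} driven by $H^{(k)}_\star$.

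To pass from $\wh H$ to $\ov H=N+Z_6+R_{10}$, I would split
\[
\ov H-\wh H=(N-N_{|\A})+(Z_6-Z_6^{\A})+R_{10},
\]
where $Z_6^{\A}$ retains only the monomials whose six indices all lie in $\A$. Since $N$ depends only on actions, the contribution of $N-N_{|\A}$ to $\dot K_0^{(k)}=\{I_{a_1^{(k)}},\ov H\}$ vanishes, and its contribution to $\dot\phi_0^{(k)}$ involves only $I_p$ with $p\notin\A$, which are $\mathcal{O}(\eps^6)$ by Lemma \ref{lem.dyn}. The decisive observation is that by Proposition \ref{prop.nonres}, every resonant sextuple with at least one index outside $\A$ must actually have at least two such indices; together with $|\xi_p|,|\eta_p|\lesssim\eps^3$ for $p\notin\A$ and $|\xi_j|,|\eta_j|\lesssim\eps$ for $j\in\A$, this yields $|Z_6-Z_6^{\A}|=\mathcal{O}(\eps^{10})$, and the same bound carries over to its Poisson brackets with $I_{a_1^{(k)}}$ and $\phi_0^{(k)}$. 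Analogous estimates apply to $R_{10}$ because of its degree $\geq 10$. Finally, Lemma \ref{lem.dyn} also gives that the drift of $K^{(k)}_1,K^{(k)}_2,K^{(k)}_{1/2}$ away from their frozen values is $\mathcal{O}(\eps^{10})t=\mathcal{O}(\eps^4)$ for $|t|\leq\eps^{-6}$, so replacing the drifted values by the frozen ones in $\wh H_k$ contributes an extra error bounded by $\mathcal{O}(\eps^{10})$ in $\dot K_0^{(k)}$ and $\dot\phi_0^{(k)}$.

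After the time change $\tau_k=\eps_k^4 t$ and scaling $K_0^{(k)}=\eps_k^2 K^{(k)}$, time derivatives are divided by $\eps_k^6$. The assumption $n_K\leq -(\ln\eps)/8$ guarantees $\eps_k\geq\eps^{9/8}$, so $\eps_k^{-6}\leq\eps^{-27/4}$, and the $\mathcal{O}(\eps^{10})$ contributions above become $\mathcal{O}(\eps^{13/4})$, uniformly in $k$ and hence absorbed into $\mathcal{O}(\eps^2)$. The hard part I expect is the combinatorial bookkeeping of the previous paragraph: without Proposition \ref{prop.nonres}, $Z_6-Z_6^{\A}$ could contain monomials with only one non-resonant factor, giving $\mathcal{O}(\eps^8)$, which would be overwhelmed by the $\eps_k^{-6}$ loss of the rescaling. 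The geometric condition on $(n_k)$ encoded in Proposition \ref{prop.nonres} is precisely what makes the uniform $\mathcal{O}(\eps^2)$ bound possible.
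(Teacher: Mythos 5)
Your plan reproduces what the paper only sketches (the proof of Proposition~\ref{prop.4.2} is essentially a citation of Lemma~\ref{lem.dyn} and \cite[Section~5]{GT2}): you split $\ov H-\wh H$ into the action-only piece, the truncated resonant piece $Z_6-Z_6^{\A}$ and the remainder $R_{10}$, you identify the decisive role of Proposition~\ref{prop.nonres} (a resonant sextuple cannot have exactly one index outside $\A$, so $Z_6-Z_6^{\A}$ carries at least two $\eps^3$-small factors), and you observe that the uniform-in-$k$ bound is salvaged by $\eps_k\geq\eps^{9/8}$. This is indeed the right architecture and the right new ingredient compared to \cite{GT2}.

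However, your bookkeeping of the \emph{freezing error} is not justified as stated, and a na\"ive version of it does not close at the claimed order. You assert that replacing the drifted $K_1^{(k)},K_2^{(k)},K_{1/2}^{(k)}$ by their frozen values $(\eps_k^2,\eps_k^2/2,\eps_k^2/2)$ contributes ``an extra error bounded by $\mathcal{O}(\eps^{10})$ in $\dot K_0^{(k)}$ and $\dot\phi_0^{(k)}$.'' Compute it: with $\partial_{\phi_0^{(k)}}\wh H_k=-36 I_{a_2^{(k)}}^{1/2}I_{b_2^{(k)}}^{1/2}I_{a_1^{(k)}}I_{b_1^{(k)}}\sin(2\phi_0^{(k)})$, the partial derivative of $\dot K_0^{(k)}$ with respect to (say) $K_1^{(k)}$ has size $\sim\eps_k^4$; multiplying by the drift $|\Delta K_1^{(k)}|=\mathcal{O}(\eps^{10})\,t\leq\mathcal{O}(\eps^4)$ given by Lemma~\ref{lem.dyn} for $t\leq\eps^{-6}$ yields an error $\sim\eps_k^4\cdot\eps^4$, not $\mathcal{O}(\eps^{10})$. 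After the rescaling $\eps_k^{-6}$ this is $\eps^4/\eps_k^2$, and since $\eps_k$ can be as small as $\eps^{9/8}$, the bound is only $\mathcal{O}(\eps^{7/4})$, which is \emph{larger} than the stated $\mathcal{O}(\eps^2)$ near the endpoint $\tau_k\sim\eps^{-6}\eps_k^4$. To close the estimate at order $\eps^2$ you must either (i) sharpen Lemma~\ref{lem.dyn}'s drift bound by tracking its $k$-dependence and the exponential decay $\e^{-|p|}$ of the non-resonant amplitudes (the drift rate of $K_1^{(k)}$ involves a factor $\eps_k$ from the $\A_k$ mode and further decay from the far-away modes forced by the resonance relation), or (ii) restrict to the shorter time window $t\leq\eps^{-5}$ that is actually used in the proof of Theorem~\ref{thm1}, where the drift is $\mathcal{O}(\eps^5)$ and the rescaled freezing error becomes $\eps^5/\eps_k^2\leq\eps^{11/4}=o(\eps^2)$. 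As written, your $\mathcal{O}(\eps^{10})$ claim for the freezing contribution is a genuine gap; the $Z_6-Z_6^{\A}$ and $R_{10}$ parts of your argument are sound.
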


\subsection{Proof of Theorem \ref{thm1}}

We choose the   initial conditions for $(\phi,K)$. We take $\phi^{(k)}(0)=0$ and $\gamma_{k}<K^{(k)}(0)<1-\gamma_{k}$. We also  consider the solution $(\phi^{(k)}_{\star},K^{(k)}_{\star})$ to \eqref{syst*} with initial condition  $(\phi^{(k)}_{\star},K^{(k)}_{\star})(0)= (\phi^{(k)},K^{(k)})(0)$. Then by Proposition \ref{prop.4.2} and \cite[Lemma 5.3]{GT2} \footnote{As here, in  \cite[Lemma 5.3]{GT2} there is actually an additional error term $\mathcal{O}(\eps^2)\tau^{2}$. This  restrains the main  result in  \cite{GT2} to times $t\leq \eps^{-5}=\nu^{-5/4}$.} for all $0\leq \tau_{k} \leq \eps^{-5}\eps^{4}_{k}$ we have
\begin{equation*} 
 (\phi^{(k)},K^{(k)})(\tau_{k})=(\phi^{(k)}_{\star},K^{(k)}_{\star})(\tau_{k})+ \mathcal{O}(\eps^2)\tau_{k}+\mathcal{O}(\eps^2)\tau_{k}^2,
\end{equation*}
 which in turn implies that for all $0\leq t\leq \eps^{-5}$  
\begin{eqnarray*}
K^{(k)}_{0}(t)&=&\eps_{k}^{2}K^{(k)}_{\star}(\eps_{k}^{4}t)+ \mathcal{O}(\eps^{2}\eps^{6}_{k})t+\mathcal{O}(\eps^{2}\eps^{10}_{k})t^2  \\
\phi^{(k)}_{0}(t)&=&\phi^{(k)}_{\star}(\eps_{k}^{4}t)+ \mathcal{O}(\eps^{2}\eps^{4}_{k})t+\mathcal{O}(\eps^{2}\eps^{8}_{k})t^2.
\end{eqnarray*}
For $t\leq \eps^{-9/2}=\nu^{-9/8}$ we get \eqref{u}.

The period of $K^{(k)}_{0}$ is $2T_{k}\eps^{-4}_{k}=2T_{k}\eps^{-4}\e^{4n_{k}}$, thus one has to ask that 
\begin{equation}\label{log.cond}
2T_{k}\e^{4n_{k}}\leq \eps^{-1/2}=\nu^{-1/8}.
\end{equation}
In view of Proposition \ref{prop.mar}, we have that, depending on the choice of initial data, $T_k$ spans at least the open interval $(\frac{2\pi}{9}(10 J\eps^{-2}_{k}-3)^{-1/2},+\infty)$, which contains the interval $({\pi}/(9\sqrt{3}),+\infty)$ for all $k$, so we can choose the initial data in such a way that $T_k=1/2$ for all $k$. Therefore, since $n_k\leq n_K$, \eqref{log.cond} is satisfied provided that $n_K\leq-1/32\ln\nu$.

\subsection{Proof of Corollary \ref{Coro}}

For each $k$, the period of $v_k$ equals the period of $K^{(k)}_{0}$, whose value is $2\nu^{-1} T_k\e^{4n_k}$. In the proof of Theorem \ref{thm1} we have observed that, given any $\bar T\in({\pi}/(9\sqrt{3}),+\infty)$, one can choose the initial data in such a way that $T_k=\bar T$. This, with $n_k\leq n_K$, implies that, for any given $\bar\Lambda\in({\pi\e^{4n_K}}/(9\sqrt{3}),+\infty)$, one can choose the initial data so that the period of $v_k$ is equal to $2\bar\Lambda/\nu$. Therefore, given any $K\geq 1$ and any $\Lambda_{1},\Lambda_{2},\dots,\Lambda_{K}>0$, there exists $N \in\N$ so that, with a proper choice of the initial data, $v_{k}$ has period $2N\Lambda_{k}/\nu$, if $\nu$ is small enough. It suffices to choose the smallest $N$ such that $N\Lambda_k>\pi\e^{4n_K}/(9\sqrt{3})$ for all $k$, which is admissible if for the chosen $N$ one has $N\Lambda_k<\nu^{-1/8}$ for all $k$, which is verified provided that $\nu$ is small enough.


\begin{thebibliography}{99}

\bibitem{Arn89}
V.~I. {Arnold}.
\newblock {\em {Mathematical Methods of Classical Mechanics}}.
\newblock Springer, Berlin, 1989. Second edition. 
  
\bibitem{BZ02}
E.~Bombieri and U.~Zannier.
\newblock  A note on squares in arithmetic progressions, II.
\newblock{\em Rend. Mat. Acc. Lincei} (2002), s. 9, v. 13, 69--75. 
  
 
 


\bibitem{Gre07}
B. Gr{\'e}bert.
 \newblock Birkhoff normal form and {H}amiltonian PDEs, 
\newblock{\em  Partial differential equations and applications, S\'emin. Congr.,
  vol.~15, Soc. Math. France, Paris} 2007, pp.~1--46. 

      
     \bibitem{GPT}
 B. Gr\'ebert, E. Paturel  and L. Thomann.
 \newblock Beating effects in cubic Schr\"odinger systems and growth of Sobolev norms.
    \newblock{  arXiv:1208.5680}. 

   \bibitem{GT2}
 B. Gr\'ebert  and L. Thomann.
 \newblock Resonant dynamics for the quintic non linear Schr\"odinger equation. 
    \newblock{\em Ann. I. H. Poincar\'e - AN},   29 (2012), no. 3, 455--477.
    
    
 \bibitem{GV}
 B. Gr\'ebert  and C. Villegas-Blas.
 \newblock On the energy exchange between resonant modes in nonlinear Schr\"odinger equations.
  \newblock{\em Ann. I. H. Poincar\'e - AN}, 28 (2011), no. 1, 127--134.
  

\end{thebibliography}
\end{document}